\documentclass[english]{article}
\usepackage[T1]{fontenc}
\usepackage{geometry}
\geometry{verbose,tmargin=3cm,bmargin=3cm,lmargin=3cm,rmargin=3cm}
\usepackage{color}
\usepackage{babel}
\usepackage{refstyle}
\usepackage{float}
\usepackage{amsmath}
\usepackage{amsthm}
\usepackage{amssymb}
\usepackage{graphicx}
\usepackage{setspace}
\usepackage[authoryear]{natbib}
\setstretch{1.5}
\usepackage[unicode=true,pdfusetitle,
 bookmarks=true,bookmarksnumbered=true,bookmarksopen=true,bookmarksopenlevel=1,
 breaklinks=false,pdfborder={0 0 1},backref=false,colorlinks=true]
 {hyperref}

\makeatletter

\providecommand{\tabularnewline}{\\}
\newcommand{\lyxdot}{.}

\RS@ifundefined{subsecref}
  {\newref{subsec}{name = \RSsectxt}}
  {}
\RS@ifundefined{thmref}
  {\def\RSthmtxt{theorem~}\newref{thm}{name = \RSthmtxt}}
  {}
\RS@ifundefined{lemref}
  {\def\RSlemtxt{lemma~}\newref{lem}{name = \RSlemtxt}}
  {}

\theoremstyle{definition}
    \ifx\thechapter\undefined
      \newtheorem{defn}{\protect\definitionname}
    \else
      \newtheorem{defn}{\protect\definitionname}[chapter]
    \fi
\theoremstyle{definition}
    \ifx\thechapter\undefined
      \newtheorem{example}{\protect\examplename}
    \else
      \newtheorem{example}{\protect\examplename}[chapter]
    \fi
\theoremstyle{plain}
    \ifx\thechapter\undefined
      \newtheorem{lem}{\protect\lemmaname}
    \else
      \newtheorem{lem}{\protect\lemmaname}[chapter]
    \fi
\theoremstyle{plain}
    \ifx\thechapter\undefined
      \newtheorem{prop}{\protect\propositionname}
    \else
      \newtheorem{prop}{\protect\propositionname}[chapter]
    \fi
\theoremstyle{remark}
    \ifx\thechapter\undefined
      \newtheorem{claim}{\protect\claimname}
    \else
      \newtheorem{claim}{\protect\claimname}[chapter]
    \fi




\usepackage{url}

\makeatother

\providecommand{\claimname}{Claim}
\providecommand{\definitionname}{Definition}
\providecommand{\examplename}{Example}
\providecommand{\lemmaname}{Lemma}
\providecommand{\propositionname}{Proposition}

\begin{document}
\title{Iteration of Functions $f:X^{k}\rightarrow X$ and their Periodicity}
\author{Suneil Parimoo\thanks{Columbia University. Email: sp3681@columbia.edu}}
\date{This version: October 25, 2020}
\maketitle
\begin{abstract}
We propose a notion of iterating functions $f:X^{k}\rightarrow X$
in a way that represents recurrence relations of the form $a_{n+k}=f(a_{n},a_{n+1},...,a_{n+k-1})$.
We define a function as \textit{$n$-involutory} when its $n$th iterate
is the identity map, and discuss elementary group-theoretic properties
of such functions along with their relation to cycles of their corresponding
recurrence relations. Further, it is shown that a function $f:X^{k}\rightarrow X$
that is $2$-involutory in each of its $k$ arguments (holding others
fixed) is $(k+1)$-involutory.

\noindent \textbf{Mathematics Subject Classification (2020):} Primary:
39B12; Secondary: 05E05, 20A05, 26A18, 30D05, 37B20, 37C25

\noindent \textbf{Keywords:} Iterative functional equations, iterative
roots, involution, periodic function, symmetric function, Babbage
equation, recurrence relation, difference equation, fixed point, cycle
\end{abstract}

\section{{\large{}\label{sec1}Introduction}}

This paper proposes a means to extend the notion of function iteration
to functions $f:X^{k}\rightarrow X$ for some set $X$ and integer
$k\geq1$, and aims to explore such functions that obey a certain
iterative periodic property.

Function iteration is well defined when functions are self-maps. For
a self-map $f:X\rightarrow X$, the $n$th iterate of $f$, denoted
by $f^{n}$ for some nonnegative integer $n$, is defined recursively
by 
\[
f^{0}\equiv id_{X}
\]
and 
\[
f^{n+1}\equiv f\circ f^{n},
\]
where $id_{X}$ is the identity map on $X$. If $f$ is invertible
with inverse $f^{-1}$, then this definition extends to negative iterates,
where the $-n$th iterate of $f$ is the $n$th iterate of $f^{-1}$.
The associativity of function composition immediately gives the following
properties for integers $m$ and $n$:

\begin{equation}
1.\text{ Addition rule: }f^{m}\circ f^{n}=f^{n}\circ f^{m}=f^{m+n}\label{eq:1}
\end{equation}
\begin{equation}
2.\text{ Multiplication rule: }\left(f^{m}\right)^{n}=\left(f^{m}\right)^{n}=f^{mn}.\label{eq:2}
\end{equation}
The multiplication rule (\ref{eq:2}) provides a natural means of
extension to fractional iterates; for integers $n\neq0$ and $m$,
with $gcd(m,n)=1$, an $\frac{m}{n}$th iterate of $f$ is any function
$g$ such that $g^{n}=f^{m}$ (c.f. \citet{Isaacs1950}).

Function iteration for self-maps may be understood as a representation
of a recurrence relation. For a sequence $\{a_{n}\}_{n\in\mathbb{N}}$
defined recursively through some self-map $f$ by 
\[
a_{n+1}=f(a_{n})
\]
given some seed value $a_{0}$, the $n$th term of the sequence may
be computed as 
\[
a_{n}=f^{n}(a_{0}).
\]
In extending the notion of function iteration to a function $f:X^{k}\rightarrow X$,
it is desirable for the iterates to likewise represent the state of
a recurrence relation. One way to define its iterates is by defining
its first iterate, $f^{1}$, as the self-map on $X^{k}$ given by
\begin{equation}
f^{1}:(x_{1},x_{2},...,x_{k})\rightarrow(\underbrace{f(x_{1},x_{2},...,x_{k}),...,f(x_{1},x_{2},...,x_{k}}_{k\text{ times}}),\label{eq:2-2}
\end{equation}
and defining other iterates of $f$ as typical iterates of $f^{1}.$
Iterates thus defined treat the $k$ arguments in a symmetric manner
and consequently feature redundancies in that each of their $k$ component
functions are identical. However, it is not immediately clear what
distinct application this definition of function iteration serves.\footnote{Defining function iteration as in (\ref{eq:2-2}) can be understood
as representing a recurrence relation $a_{n+1}=f(\underbrace{a_{n},...,a_{n}}_{k\text{ times}})$.
However, such a system can be written succinctly as $a_{n+1}=g(a_{n})$
for self-map $g$ and thus can be represented by usual function iteration. } Instead, the definition of function iteration for a function $f:X^{k}\rightarrow X$
that is offered in this paper has the natural interpretation of representing
$k$th order recurrence relations of the form 
\begin{equation}
a_{n+k}=f(a_{n},a_{n+1},...,a_{n+k-1}).\label{eq:3}
\end{equation}
This representation is done by defining $f^{1}$ as a self-map on
$X^{k}$ that produces $k$ consecutive terms of the recurrence relation,
and defining iterates of $f$ as typical iterates of $f^{1}$. A formal
definition with examples is presented in section \ref{sec2}.

A self-map $f:X\rightarrow X$ is said to be \textit{involutory} (or
is an \textit{involution}) if it is its own inverse: $f^{2}=id_{X}$
(\citet{Aczel1948}). The immediate generalization of this property
is that a self-map's $(n-1$)th iterate is identical to its own inverse
for positive integer $n$:
\begin{equation}
f^{n}=id_{X}.\label{eq:3-1}
\end{equation}
Treating $f$ as an unknown, (\ref{eq:3-1}) defines a functional
equation known as \textit{Babbage's functional equation} (\citet{Babbage1815},
\citet{Babbage1816}, \citet{Babbage1820}, \citet{Babbage18211822})
and its solutions have been well studied in the literature. \citet{Lojasiewicz1951}
provides the general construction of the solution (see also \citet{Bogdanov1961},
and for certain real solutions, see the earlier work \citet{Ritt1916}).
Much of the work of the twentieth century relating to Babbage's equation---and
more broadly, the theory of functional equations---can be found in
the monograph \citet{Kuczma1990} (c.f. also \citet{Baron2001} for
a further development of this work).

A solution $f$ of (\ref{eq:3-1}) is generally referred to as an
\textit{nth iterative root of identity} (\citet{Kuczma1990}), and
when $n$ is the smallest positive integer such that $f$ satisfies
(\ref{eq:3-1}), $f$ is variously known as a \textit{function of
order $n$} or said to \textit{circulate with period $n$} (\citet{Ritt1916}),
or is\textit{ }said to be \textit{periodic }with\textit{ period $n$}
(\citet{McShane1961}). In this paper, we introduce and mostly use
the terminology \textit{involutory of order }$n$, or\textit{ $n$-involutory.}
As defined in section \ref{sec3}, a function $f:X^{k}\rightarrow X$
is said to be \textit{$n$-involutory} when its $n$th iterate is
the identity map. That is, $f$ is $n$-involutory when $f^{1}$,
a self-map on $X^{k}$, satisfies Babbage's equation throughout its
domain, $X^{k}$. More generally, a function $f:X^{k}\rightarrow X$
is \textit{$n$-involutory at point $x$} when $f^{1}$ satisfies
Babbage's equation at the specified point $x\in X^{k}$.\footnote{This terminology may seem redundant, although there are some subtle
distinctions between being $n$-involutory and being periodic with
period $n$. To begin with, I apply the term ``$n$-involutory''
to describe $f$, which need not be a self-map (if $k>1$), since
the focus of the paper is on a sufficient condition on $f$ under
which the self-map $f^{1}$ satisfies Babbage's equation. I also do
not restrict the order $n$ to be minimal in stating that a function
is $n$-involutory. More conceptually, however, the term ``$n$-involutory''
reminds the reader that the property it signifies is ultimately a
generalization of being involutory, and further, as will be made clear
in Proposition \ref{Prop2}, that involutory self-maps on $X$ directly
give rise to functions $f:X^{k}\rightarrow X$ having this property.} 

In section \ref{sec3.1}, I discuss elementary group-theoretic properties
of self-maps that are $n$-involutory. In this context, I also provide
a concise proof that continuous self-maps on $\mathbb{R}$ that are
involutory of an integral order must in fact be involutory. This latter
result, Proposition \ref{Prop1}, is well-known in the literature
on functional equations (c.f. \citet{Ewing1953}, \citet{Vincze1959},
\citet{McShane1961}), and is stated in Theorem 11.7.1 in \citet{Kuczma1990}.
Proposition \ref{Prop1} supplies general motivation in the sequel
for approaching the question of when multivariate functions $f:X^{k}\rightarrow X$
have a common involutory order (only possibly depending on $k$).
In section \ref{sec3.2}, I consider elementary properties of functions
$f:X^{k}\rightarrow X$ that are $n$-involutory. In the spirit of
Proposition \ref{Prop1}, the main result obtained in this section
is Proposition \ref{Prop2}, which asserts that when multivariate
functions are involutory in each of their $k$ arguments (a property
I refer to as being \textit{induced involutory}, defined more precisely
in the sequel), they are involutory of common order $k+1$. Section
\ref{sec3.3} discusses the relation between functions that are $n$-involutory
at a point and cycles of the recurrence relations that they represent.
Section \ref{sec4} concludes with discussion on possible extensions
and some research questions.

\section{{\large{}\label{sec2}Definition of iteration of functions $f:X^{k}\rightarrow X$}}

Consider the recurrence relation in (\ref{eq:3}). Observe that the
state of the system is characterized by a $k$-tuple of the elements
of the sequence itself. Our definition of function iteration for functions
$f:X^{k}\rightarrow X$ is based on representing such systems and
is given as follows:
\begin{defn}
\label{Defn1}For a set $X$ and function $f:X^{k}\rightarrow X$,
the $n$th iterate of $f$ , denoted by $f^{n}$ for nonnegative integer
$n$, is defined as the $n$th iterate of the self-map $f^{1}:X^{k}\rightarrow X^{k}$,
given by 
\[
f^{1}:(x_{1},x_{2},...,x_{k})\rightarrow(f_{1}^{1},f_{2}^{1},...,f_{k}^{1}),
\]
\[
f_{1}^{1}=f(x_{1},x_{2},...,x_{k}),
\]
\[
f_{2}^{1}=f(x_{2},x_{3},...,x_{k},f_{1}^{1}),
\]
\[
\vdots
\]
\[
f_{j}^{1}=f(x_{j},x_{j+1},...,x_{k},f_{1}^{1},f_{2}^{1},...,f_{j-1}^{1})
\]
\[
\vdots
\]
\[
f_{k}^{1}=f(x_{k},f_{1}^{1},f_{2}^{1},...,f_{k-1}^{1}).
\]
\end{defn}
Note that while $f$ itself is not a self-map, its iterates thus defined
are self-maps and satisfy the addition and multiplication rules, (\ref{eq:1})
and (\ref{eq:2}). This definition hence also allows for an immediate
extension to negative and fractional iterates in the way described
before. In the event the function $f$ is a self-map ($k=1$), this
definition naturally concurs with the usual definition of function
iteration. The graphical construction of $f^{1}$ for $f:\mathbb{R}^{2}\rightarrow\mathbb{R}$
is illustrated in Figure (\ref{fig1}). 

\begin{figure}[t]
\begin{centering}
\includegraphics[scale=0.25]{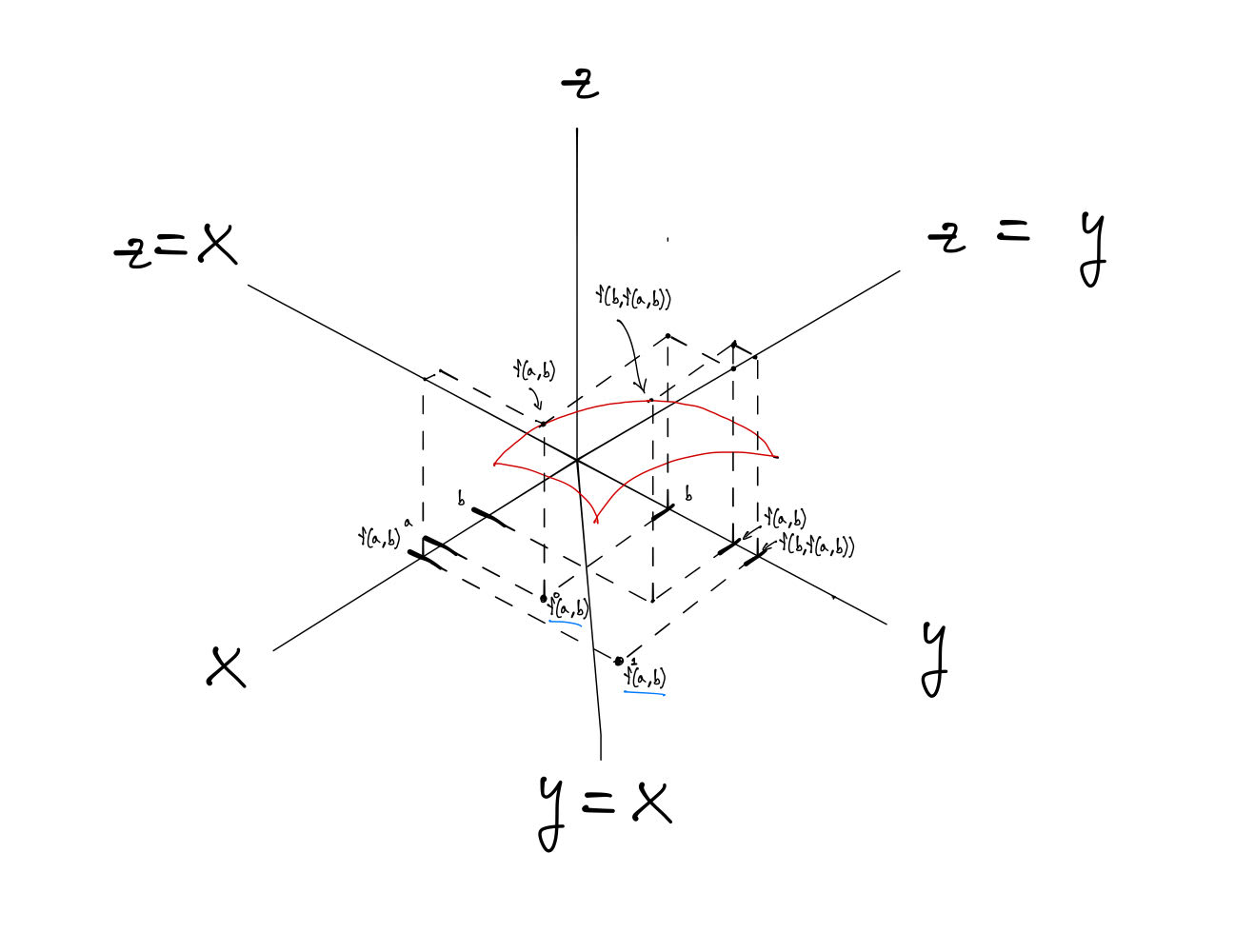}
\par\end{centering}
\caption{\label{fig1}Zeroth and first iterates (underlined in blue) at point
$(a,b)\in\mathbb{R}^{2}$ of function (red) $f:\mathbb{R}^{2}\rightarrow\mathbb{R}$}
\end{figure}
Definition (\ref{Defn1}) allows us to represent recurrence relations
described by (\ref{eq:3}) as an iterated function:
\begin{equation}
(a_{nk+1},a_{nk+2},...,a_{nk+k})=f^{n}(a_{1},a_{2},...,a_{k}).\label{eq:4}
\end{equation}

\subsection{\label{sec2.1}Examples}
\begin{example}
\label{Ex1}Consider the function $f:\mathbb{C}^{2}\rightarrow\mathbb{C}$
given by 
\[
f(x_{1},x_{2})=x_{1}+x_{2}.
\]
Its iterates encode terms of the recurrence relation $a_{n+2}=a_{n}+a_{n+1},$
and are given by 
\[
f^{n}(x_{1},x_{2})=(F_{2n-1}x_{1}+F_{2n}x_{2},F_{2n}x_{1}+F_{2n+1}x_{2}),
\]
where $F_{n}$ is the $n$th Fibonacci number, where $F_{0}=0$ and
$F_{1}=1.$
\end{example}
\begin{example}
\label{Ex2}Consider the function $f:X^{k}\rightarrow X$ for some
set $X$ given by 
\[
f(x_{1},x_{2},...,x_{k})=g(x_{j}),\quad j\in\{1,2,...,k\}
\]
for some function $g:X\rightarrow X$. The iterates of $f$ encode
terms of the recurrence relation $a_{n+k}=g(a_{n+j-1})$. In the case
$j=1$, the iterates are given by
\[
f^{n}(x_{1},x_{2},...,x_{k})=(g^{n}(x_{1}),g^{n}(x_{2}),..,g^{n}(x_{k})),
\]
while in the case $j=k$, the iterates (for positive integer $n$)
are given by 
\[
f^{n}(x_{1},x_{2},...,x_{k})=(g^{nk-k+1}(x_{k}),g^{nk-k+2}(x_{k}),..,g^{nk}(x_{k})).
\]
\end{example}
\begin{example}
\label{Ex3}Consider the function $f:\mathbb{C}^{k}\rightarrow\mathbb{C}$
given by 
\[
f(x_{1},x_{2},...,x_{k})=A-\sum_{j=1}^{k}x_{j},
\]
for some constant $A\in\mathbb{C}$, whose iterates encode terms of
the recurrence relation $a_{n+k}=A-\sum_{i=0}^{k-1}a_{n+i}$. Its
iterates are cyclical, such that for any integer $m$, they are given
by
\[
f^{m(k+1)}(x_{1},x_{2},...,x_{k})=(x_{1},x_{2},...,x_{k})
\]
\[
f^{m(k+1)+1}(x_{1},x_{2},...,x_{k})=(A-\sum_{j=1}^{k}x_{j},x_{1},x_{2},...,x_{k-1})
\]
\[
\vdots
\]
\[
f^{m(k+1)+p}(x_{1},x_{2},...,x_{k})=(x_{k-p+2},x_{k-p+3},...,x_{k},A-\sum_{j=1}^{k}x_{j},x_{1},x_{2},...,x_{k-p})
\]
\[
\vdots
\]
\[
f^{m(k+1)+k}(x_{1},x_{2},...,x_{k})=(x_{2},x_{3},...,x_{k},A-\sum_{i=1}^{k}x_{i}).
\]
\end{example}
Cycles such as those in the third example supply the motivation for
exploring functions $f:X^{k}\rightarrow X$ featuring such iterative
periodicity, as pursued in section \ref{sec3}.

\section{{\large{}\label{sec3}Periodicity}}

\subsection{\label{sec3.1}Periodicity of self-maps}

Based on the notion of function iteration established in Definition
(\ref{Defn1}), I offer the following definition as a generalization
of involutory functions:
\begin{defn}
\label{Defn2}A function $f:X^{k}\rightarrow X$ is \textit{involutory
of order $n$,} or \textit{$n$-involutory}, for integer $n$ when
$f^{n}=id_{X^{k}}.$ The integer $n$ is referred to as an \textit{involutory
order} of $f$.
\end{defn}
A function that is $n$-involutory is thus one whose $(n-1)$th iterate
is the inverse map of its first iterate. More generally, such a function
can be characterized as one whose $j$th iterate is the inverse map
of its $(n-j)$th iterate for any integer $j$. Note that our definition
is based on integral involutory orders to ensure well-definedness.
Of course, every function is $0$-involutory. If $f$ has a positive
involutory order, then when $n$ is the smallest positive involutory
order of $f$ (equivalently, when $f^{1}$ is periodic with period
$n$ as per \citet{McShane1961}), the set
\[
\{id_{X^{k}},f^{1},f^{2},...,f^{n-1}\},
\]
endowed with the composition operation, $\circ$, comprises a cyclical
group of order $n$ generated by $f^{1}$.

The following are immediate properties of such functions when they
are self-maps: 
\begin{lem}
\label{Lemma1} Let $f:X\rightarrow X$ be $n$-involutory \textit{for
$n\in\mathbb{Z}$. Then we have the following:}

1. $f$ is $nm$-involutory for any $m\in\mathbb{Z}$.

2. For $m\in\mathbb{Z}$, $f^{m}$ is $\frac{n}{gcd(n,m)}$-involutory. 

3. The identity map, $id_{X}$, is $m$-involutory for any $m\in\mathbb{Z}$.

4. If $f$ is also $m$-involutory for $m\in\mathbb{Z}$, and $gcd(m,n)=1$,
then $f=id_{X}$. 

5. \textit{For any invertible function $g:Y\rightarrow X$ }, the
map $g^{-1}\circ f\circ g$ is $n$-involutory.
\end{lem}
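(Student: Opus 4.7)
The plan is to derive all five parts from the addition and multiplication rules, (\ref{eq:1}) and (\ref{eq:2}), which apply directly since $f$ is a self-map here (the $k=1$ case of Definition \ref{Defn1}). One preliminary remark: the hypothesis that $f$ is $n$-involutory for some nonzero $n$ already forces $f$ to be a bijection (with explicit inverse $f^{n-1}$ when $n\geq 1$, and symmetrically for $n\leq -1$), so negative iterates are well defined whenever we need to refer to them.

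Parts 1, 2, and 3 should follow directly from the multiplication rule. For part 1, $f^{nm} = (f^n)^m = id_X^m = id_X$. For part 3, every iterate of $id_X$ is $id_X$. For part 2, I would set $d = \gcd(n,m)$ so that $n/d$ and $m/d$ are both integers, and compute
\[
(f^m)^{n/d} = f^{mn/d} = (f^n)^{m/d} = id_X^{m/d} = id_X.
\]

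Part 4 is the one place where an outside ingredient is needed, namely B\'ezout's identity. Under $\gcd(m,n) = 1$ I would pick integers $a, b$ with $am + bn = 1$; the addition rule then gives
\[
f = f^{1} = f^{am+bn} = (f^m)^a \circ (f^n)^b = id_X^a \circ id_X^b = id_X.
\]
Conceptually this is the familiar fact that the involutory orders of $f$ form a subgroup of $\mathbb{Z}$, and coprimality of two elements forces the subgroup to be all of $\mathbb{Z}$.

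Part 5 is a telescoping calculation: the $n$-fold composition $(g^{-1}\circ f\circ g)^{n}$ collapses, since the $n-1$ interior pairs $g\circ g^{-1}$ cancel, to $g^{-1}\circ f^{n}\circ g = g^{-1}\circ id_X\circ g = id_Y$. To the extent any step is delicate, it is part 4, which is where B\'ezout enters; the other parts are pure bookkeeping with the iteration rules.
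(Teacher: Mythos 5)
Your proof is correct, and it is essentially the paper's argument written out in full: the paper disposes of all five parts with the single remark that they are ``apparent from the aforementioned group structure,'' which is exactly the cyclic-group/B\'ezout reasoning you make explicit (your subgroup-of-$\mathbb{Z}$ comment for part 4 is precisely that structure). Your preliminary observation that a nonzero involutory order forces invertibility, so that negative iterates are legitimate, is a worthwhile detail the paper leaves implicit.
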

The properties of Lemma \ref{Lemma1} are apparent from the aforementioned
group structure. The fifth property is a conjugacy property that has
been noted as early as Babbage's work; in particular, when $Y=X$,
it reveals that any self-map on set $X$ that is $n$-involutory gives
rise to a conjugacy class of other functions that are also $n$-involutory
within the symmetric group on $X$. For example, the involutory self-map
$h$ defined on $X\equiv\mathbb{R}_{>0}$, given by $h:x\rightarrow\ln\left(\frac{e^{x}+1}{e^{x}-1}\right)$,
is conjugate to the linear self-map $-id_{X}$ since $h=g^{-1}\circ(-id_{X})\circ g$,
where $g=g_{1}\circ g_{2}\circ g_{3}\circ g_{4},$ with $g_{1}:x\rightarrow x-\frac{1}{2},g_{2}:x\rightarrow\log_{2}(x),g_{3}:x\rightarrow x-1,g_{4}:x\rightarrow e^{x}$.
In fact, it turns out all strictly decreasing involutions on a real
interval are conjugate to the negative identity map and hence are
conjugate to each other (c.f. Theorem 11.7.3 in \citet{Kuczma1990}).
Another result, perhaps more surprising, is that all complex rational
self-maps that are $n$-involutory fall into one of three simple explicit
conjugacy classes (c.f. Theorem 11.7.4 in \citet{Kuczma1990}). Exploring
this conjugacy property, and particularly determining when a function
is \textit{linearizable} (or conjugate to a linear map), has given
rise to a fruitful area of research within the theory of functional
equations (see, for instance, the recent work \citet{HOMSDONES2020},
which gives a review of several main results in this area for periodic
functions, and looks at linearization of functions satisfying a generalization
of Babbage's equation (\ref{eq:3-1})).

While Lemma \ref{Lemma1} applies to functions that are involutory
of arbitrary order $n$, for continuous self-maps on $\mathbb{R}$,
a function that is involutory of some integral order must be involutory
of order 2 (i.e. involutory), as the next proposition establishes.
\begin{prop}
\label{Prop1}Let $f:I\rightarrow I$ be a continuous self map on
interval $I\subset\mathbb{R}$. Then $f$ is $n$-involutory for some
integer $n$ if and only if either 1. $f=id_{I}$ or 2. $n$ is even
and $f$ is a strictly decreasing involution.
\end{prop}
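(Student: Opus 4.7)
The ``if'' direction is immediate: the identity map is $n$-involutory for every $n$, and any involution $f$ satisfies $f^n = \text{id}_I$ for every even $n$. The real content is the ``only if'' direction, which I would organize around monotonicity.

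First I would observe that if $f^n = \text{id}_I$ for some nonzero integer $n$, then without loss of generality $n \geq 1$ (otherwise replace $n$ by $-n$, which also works since $f$ is then invertible with $f^{-1} = f^{n-1}$). From $f \circ f^{n-1} = \text{id}_I$ it follows that $f$ is a bijection $I \to I$. The standard fact that a continuous injection on an interval is strictly monotonic then splits the analysis into two cases.

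\textbf{Case 1: $f$ is strictly increasing.} Here I would prove $f = \text{id}_I$ directly: suppose for contradiction there exists $x \in I$ with $f(x) > x$. By strict monotonicity, applying $f$ repeatedly yields $f^{j+1}(x) > f^j(x)$ for every $j \geq 0$, so $f^n(x) > x$, contradicting $f^n(x) = x$. The case $f(x) < x$ is symmetric. Hence $f(x) = x$ for all $x \in I$.

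\textbf{Case 2: $f$ is strictly decreasing.} A composition of an odd number of strictly decreasing functions is strictly decreasing, so if $n$ were odd, $f^n$ would be strictly decreasing, contradicting $f^n = \text{id}_I$. Therefore $n$ is even, say $n = 2m$. Now $f^2$ is strictly increasing (composition of two decreasing maps) and satisfies $(f^2)^m = \text{id}_I$, so Case 1 applied to $f^2$ yields $f^2 = \text{id}_I$, i.e.\ $f$ is a strictly decreasing involution.

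The main obstacle I anticipate is simply being careful about sign/parity bookkeeping and the reduction to $n \geq 1$; the monotonicity dichotomy together with the elementary ``escape to infinity'' argument in the increasing case carries almost all of the weight. No deep topological input beyond the intermediate-value theorem (via the injective-plus-continuous-implies-monotonic lemma) is needed.
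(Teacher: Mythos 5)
Your proof is correct and follows essentially the same route as the paper: strict monotonicity from continuity plus injectivity, the escape-to-infinity argument ruling out non-identity increasing maps, parity of $n$ from the decreasing case, and finally $f^{2}=id_{I}$. The only (minor) difference is in the last step, where you derive $f^{2}=id_{I}$ by applying the increasing case to $f^{2}$ with $(f^{2})^{m}=id_{I}$, whereas the paper argues directly by repeatedly applying $f^{-1}$ to a hypothesized inequality $f^{2}(z')\gtrless z'$ to reach a cyclic contradiction; your reduction is a touch cleaner but substantively equivalent.
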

\begin{proof}
\textcompwordmark{}

$(\Longleftarrow)$ Sufficiency is trivial by properties 1 and 3 of
Lemma \ref{Lemma1}.

$(\implies)$ Necessity is established in steps as follows:

Step 1: $f$ is strictly monotone

By continuity, it suffices to note that $f$ is injective, which follows
from the invertibility of $f$.

For the remaining steps, assume that $f\neq id_{I}$ i.e. $\exists z\in I$
s.t. $f(z)\gtrless z$; we will show $n$ is even and $f$ is a strictly
decreasing involution.

Step 2: $f$ is strictly decreasing

Assume by contradiction that $f$ is strictly increasing. Then $f(z)\gtrless z\implies f^{2}(z)\gtrless f(z)\gtrless z\implies...\implies z=f^{n}(z)\gtrless f^{n-1}(z)\gtrless...\gtrless f(z)\gtrless z$,
a contradiction.

Step 3: $n$ is even

Assume by contradiction $n$ is odd. By repeated application of step
2, we have $f(z)\gtrless z\implies z=f^{n}(z)\gtrless f^{n-1}(z)\lessgtr...\lessgtr f(z)\gtrless z=f^{n}(z)$.
The first inequality in this chain thus asserts that $z\gtrless f^{n-1}(z)$
, while the last inequality in this chain asserts $f(z)\gtrless f\left(f^{n-1}(z)\right)$,
contradicting that $f$ is strictly decreasing.

Step 4: $f^{2}=id_{I}$

Assume by contradiction that $\exists z'\in I$ s.t. $f^{2}(z')\gtrless f^{n}(z')=z'$.
By step 2, applying $f^{-1}$ to both sides of this inequality implies
$f(z')\lessgtr f^{n-1}(z')$. Applying the inverse again yields $f^{n}(z')=z'\gtrless f^{n-2}(z')$.
Repeatedly applying the inverse and noting $n$ is even (step 3) implies
$f^{2}(z')\gtrless f^{n}(z')\gtrless f^{n-2}(z')\gtrless...\gtrless f^{2}(z')$,
a contradiction. 
\end{proof}
As mentioned in section \ref{sec1}, the result given in Proposition
\ref{Prop1} is well established in the literature. Presenting the
result as such supplies some motivation in the sequel, where, in the
general spirit of this proposition, I obtain a sufficient condition
under which functions $f:X^{k}\rightarrow X$ are involutory of a
common order.

\subsection{\label{sec3.2}Periodicity of functions $f:X^{k}\rightarrow X$}

In this section, we consider properties of functions $f:X^{k}\rightarrow X$
that are $n$-involutory. A function $f$ being $n$-involutory is
equivalent to the self-map $f^{1}$ being $n$-involutory, so the
properties of Lemma \ref{Lemma1} extend quite naturally to such functions,
as stated in the following lemma: 
\begin{lem}
\label{Lemma2} Let $f:X^{k}\rightarrow X$ be \textit{$n$-involutory
for $n\in\mathbb{Z}$. Then we have the following:}

1. Properties 1-2 of Lemma \ref{Lemma1} hold.

2. The map $\hat{id}{}_{X^{k}}:X^{k}\rightarrow X$, defined as $\hat{id}{}_{X}:(x_{1},x_{2},...,x_{k})\rightarrow x_{1}$,
is $m$-involutory for any $m\in\mathbb{Z}$.

3. If $f$ is also $m$-involutory for $m\in\mathbb{Z}$, and $gcd(m,n)=1$,
then $f=\hat{id}{}_{X^{k}}$. 

4. For\textit{ $\tilde{f}:X\rightarrow X$ any involutory function,
}the map $\tilde{f}\circ\hat{id}{}_{X^{k}}$ is $2$-involutory.

5. \textit{For $g:Y\rightarrow X$ any invertible function,} let $\tilde{g}:Y^{k}\rightarrow X^{k}$
be given by
\[
\tilde{g}:(y_{1},y_{2},...,y_{k})\rightarrow(g(y_{1}),g(y_{2}),...,g(y_{k})).
\]

Then the map $g^{-1}\circ f\circ\tilde{g}:Y^{k}\rightarrow Y$ is
$n$-involutory.
\end{lem}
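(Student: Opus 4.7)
The core observation driving the proof is that, by Definition \ref{Defn1} and Definition \ref{Defn2}, the statement ``$f:X^{k}\rightarrow X$ is $n$-involutory'' is literally the statement ``the self-map $f^{1}:X^{k}\rightarrow X^{k}$ is $n$-involutory,'' and moreover $f^{m}$ is by convention the self-map $(f^{1})^{m}$ on $X^{k}$. Once this dictionary is in hand, the proof plan is to reduce each of the five claims to an application of Lemma \ref{Lemma1} to $f^{1}$, with the only nontrivial work being the verification that certain constructions on $f$ behave correctly under the operation $f\mapsto f^{1}$.

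For part 1, I would simply note that $(f^{1})^{n}=id_{X^{k}}$ already gives property 1 of Lemma \ref{Lemma1} on $f^{1}$, and property 2 applied to $f^{1}$ immediately gives $(f^{m})^{n/\gcd(n,m)}=(f^{1})^{mn/\gcd(n,m)}=id_{X^{k}}$. For part 2, I would compute $(\hat{id}_{X^{k}})^{1}$ directly from Definition \ref{Defn1}: writing $f=\hat{id}_{X^{k}}$, the recursive definition gives $f_{1}^{1}=x_{1}$, and then inductively $f_{j}^{1}=f(x_{j},x_{j+1},\ldots,x_{k},f_{1}^{1},\ldots,f_{j-1}^{1})=x_{j}$, so $(\hat{id}_{X^{k}})^{1}=id_{X^{k}}$, from which every iterate is the identity.

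For parts 3 and 4, part 3 follows by applying property 4 of Lemma \ref{Lemma1} to $f^{1}$ to conclude $f^{1}=id_{X^{k}}$; reading off the first component of this equation yields $f(x_{1},\ldots,x_{k})=x_{1}$, i.e.\ $f=\hat{id}_{X^{k}}$. For part 4, I would apply Definition \ref{Defn1} to $F:=\tilde{f}\circ\hat{id}_{X^{k}}$, for which $F(x_{1},\ldots,x_{k})=\tilde{f}(x_{1})$; the same inductive computation as in part 2 yields $F_{j}^{1}=\tilde{f}(x_{j})$ for all $j$, so $F^{1}$ is the componentwise application of $\tilde{f}$, and involutivity of $\tilde{f}$ then gives $F^{2}=id_{X^{k}}$.

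Part 5 is where the main (modest) obstacle lies, because it requires checking that the conjugation operation on $f$ passes correctly through the construction $f\mapsto f^{1}$. The plan is to prove by induction on $j$ that, for $h:=g^{-1}\circ f\circ\tilde{g}$, the components satisfy $h_{j}^{1}(y_{1},\ldots,y_{k})=g^{-1}\bigl(f_{j}^{1}(g(y_{1}),\ldots,g(y_{k}))\bigr)$: the base case $j=1$ is immediate from the definition of $h$, and the inductive step uses that $g\bigl(h_{i}^{1}(y_{1},\ldots,y_{k})\bigr)=f_{i}^{1}(g(y_{1}),\ldots,g(y_{k}))$ for $i<j$ to collapse the $g$'s and $g^{-1}$'s inside the argument of $f$. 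This yields the clean conjugacy relation $h^{1}=\tilde{g}^{-1}\circ f^{1}\circ\tilde{g}$, after which property 5 of Lemma \ref{Lemma1} applied to the self-map $f^{1}$ (with conjugating map $\tilde{g}$) gives $h^{n}=\tilde{g}^{-1}\circ(f^{1})^{n}\circ\tilde{g}=id_{Y^{k}}$, as required.
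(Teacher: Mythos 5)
Your proof is correct and takes essentially the approach the paper intends: the paper offers no detailed proof of Lemma \ref{Lemma2}, asserting only that the properties of Lemma \ref{Lemma1} ``extend quite naturally'' via the equivalence between $f$ being $n$-involutory and $f^{1}$ being $n$-involutory, and your argument is precisely the natural fleshing-out of that reduction. The one piece of genuine content --- the inductive verification in part 5 that $h^{1}=\tilde{g}^{-1}\circ f^{1}\circ\tilde{g}$ --- is carried out correctly.
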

It is worth noting that properties 2-4 reveal that $\hat{id}{}_{X^{k}}$
acts as a kind of identity map for functions $f:X^{k}\rightarrow X$,
since $\hat{id}{}_{X^{k}}$ has its first iterate given by $id_{X^{k}}$.
However, a map $(x_{1},x_{2},...,x_{k})\rightarrow x_{j}\text{ for }j\neq1$
generally need not be involutory of an integral order. This contrast
results from the asymmetric manner in which the iterates of $f$ treat
their arguments. While the properties of Lemma \ref{Lemma1} thus
extend to functions $f:X^{k}\rightarrow X$, Proposition \ref{Prop1}
does not immediately extend in that continuous functions in Euclidean
space that are involutory of an integral order need not be involutory
of some common order (depending only possibly on $k$). For example,
consider the function $f:\mathbb{R}^{k}\rightarrow\mathbb{R}$ given
by $f:(x_{1},x_{2},...,x_{k})\rightarrow A-x_{1}$ for some constant
$A\in\mathbb{R}$, which is $2$-involutory by property 4 of Lemma
\ref{Lemma2}. In contrast, the function $f:\mathbb{R}^{k}\rightarrow\mathbb{R}$
given by $f:(x_{1},x_{2},...,x_{k})\rightarrow A-\sum_{i=1}^{k}x_{i}$
is $(k+1)$-involutory, as per Example \ref{Ex3}. However, Example
\ref{Ex3} displays certain properties that suggest sufficient conditions
under which one may obtain a notion of a common involutory order,
in the spirit of Proposition \ref{Prop1}. We define a few terms to
facilitate understanding these properties: 
\begin{defn}
\label{Defn3}Given function $f:X^{k}\rightarrow X$ , let $f_{j}(\cdot|x_{-j}):X\rightarrow X$
denote the induced function 
\[
f_{j}(x_{j}|x_{-j})\equiv f(x_{1},x_{2},...,x_{k})
\]
where $j\in\{1,2,...,k\}$ and $x_{-j}\equiv\{x_{1},x_{2},...,x_{k}\}\backslash\{x_{j}\}\in X^{k-1}$
is fixed. The function $f:X^{k}\rightarrow X$ is \textit{induced
involutory of order $n$ in argument $j$ }(written II-$n$\{$j$\})
when the induced function $f_{j}(\cdot|x_{-j})$ is $n$-involutory
for any $x_{-j}\in X^{k-1}$. The function is \textit{induced involutory
of order $n$ }(written II-$n$) when it is II-$n$\{$j$\} for every
$j\in\{1,2,...,k\}.$ The function is \textit{induced involutory in
argument $j$ }(written II-\{$j$\})\textit{ }when it is II-$2$\{$j$\}\textit{.
}The function is\textit{ induced involutory (II) }when it is II-$2$\textit{.}
\end{defn}
In other words, a function is II-$n$\{$j$\} when it is $n$-involutory
in argument $j$, holding fixed the other arguments. Also, recall
that a function $f:X^{k}\rightarrow X$ is said to be \textit{symmetric}
when its value is unchanged by any permutation of its $k$ arguments.
The following lemma establishes the relation between II-$n$ functions
and symmetry.
\begin{lem}
\label{Lemma3} If $f:X^{k}\rightarrow X$ is symmetric and II-$n\{j\}$
for integer $n$ and $j\in\{1,2,...,k\}$, then $f$ is II-$n$. If
$f:X^{k}\rightarrow X$ is II, then it is symmetric.
\end{lem}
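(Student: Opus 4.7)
The plan is to handle the two implications separately. For the first, I would exploit the symmetry hypothesis to reshuffle arguments, converting the involutory property in argument $j$ into the same property in any other argument $j'$. Concretely, fix any $j' \in \{1,\ldots,k\}$ and any tuple $x_{-j'} \in X^{k-1}$. By symmetry of $f$, for every $y \in X$ the value $f(x_{1},\ldots,x_{j'-1},y,x_{j'+1},\ldots,x_{k})$ is unchanged if we apply any permutation $\sigma$ that swaps slots $j$ and $j'$. Doing so produces a new tuple in which $y$ now occupies slot $j$ and the remaining entries form some $\tilde{x}_{-j} \in X^{k-1}$ (depending on $x_{-j'}$ but not on $y$). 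Hence $f_{j'}(\cdot \mid x_{-j'}) = f_{j}(\cdot \mid \tilde{x}_{-j})$ as self-maps on $X$, and the right-hand side is $n$-involutory by hypothesis. Thus $f$ is II-$n\{j'\}$, and since $j'$ was arbitrary, $f$ is II-$n$.

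For the second implication, I would reduce to the two-variable case. Since transpositions generate the symmetric group on $k$ letters, it suffices to show that $f$ is invariant under swapping any pair of arguments $x_{i}$ and $x_{j}$ (with $i<j$), holding the remaining $k-2$ arguments fixed. Fixing those arguments and writing $g(a,b) := f(\ldots,a,\ldots,b,\ldots)$ with $a$ placed in slot $i$ and $b$ in slot $j$, the hypothesis II gives that $g(\cdot,b)$ and $g(a,\cdot)$ are each involutions on $X$, i.e.
\[
g(g(a,b),b)=a \quad \text{and} \quad g(a,g(a,b))=b \qquad \text{for all } a,b\in X.
\]
Then I would chase values: set $u := g(a,b)$. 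The second relation gives $g(a,u)=b$; applying the first relation with $b$ replaced by $u$ yields $g(g(a,u),u)=a$, i.e.\ $g(b,u)=a$; applying the second relation with $a$ replaced by $b$ and $y$ taken to be $u$ yields $g(b,g(b,u))=u$, i.e.\ $g(b,a)=u=g(a,b)$. This proves the desired transposition invariance, so $f$ is symmetric.

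The main obstacle is simply keeping the indexing and quantifiers straight in the four-step chase in the second implication; the argument itself is elementary because II supplies enough structure to unwind a value from either side. The first implication is essentially bookkeeping about how symmetry permits relabeling the distinguished argument.
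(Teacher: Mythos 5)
Your proof is correct and follows essentially the same route as the paper: symmetry lets you relabel the distinguished argument for the first claim, and the second claim is reduced to $k=2$ via transpositions and then settled by chasing the two involution identities. The only cosmetic difference is that you substitute values directly (via $u=g(a,b)$) where the paper equates two expressions and cancels using the invertibility of the induced maps; the underlying argument is the same.
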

\begin{proof}
The first claim is immediate since symmetry permits the arguments
of $f$ to be transposed so that for any $x_{1},x_{2},...,x_{k}\in X$
and any $j'\in\{1,2,...,k\}$, we have $x_{j}=f_{j}^{n}(x_{j}|x_{-j})=f_{j'}^{n}(x_{j}|x_{-j})$.

It suffices to show the second claim for $k=2$ since any permutation
of $k>2$ arguments is a composition of pairwise transpositions. Note
that $f$ being II implies $f$ is invertible with respect to each
argument. Observe that for any $x_{1},x_{2}\in X$, we have 
\[
x_{1}=f(f(x_{1},f(x_{2},x_{1})),f(x_{2},x_{1}))
\]
since $f$ is II-\{$1$\}, while we also have 
\[
x_{1}=f(x_{2},f(x_{2},x_{1}))
\]
since $f$ is II-\{$2$\}. Equating the two expressions for $x_{1}$
and applying the inverse of $f(\cdot,f(x_{2},x_{1}))$ to both implies
\[
x_{2}=f(x_{1},f(x_{2},x_{1})).
\]
We also have 
\[
x_{2}=f(x_{1},f(x_{1},x_{2}))
\]
since $f$ is II-\{$2$\}. Equating the two expressions for $x_{2}$
and applying the inverse of $f(x_{1},\cdot)$ to both implies $f(x_{1},x_{2})=f(x_{2},x_{1})$.
\end{proof}
Note that the second claim in Lemma \ref{Lemma3} does not apply to
II-$n$ functions for integers $n\geq3$. Consider the following example:
\begin{example}
\label{Ex4} Suppose $f:\mathbb{C}^{2}\rightarrow\mathbb{C}$ is given
by $f:(x_{1},x_{2})\rightarrow ax_{1}+bx_{2}$, where $a$ and $b$
are distinct $n$th roots of unity for integer $n\geq3$, neither
of which is unity itself. Then the $n$th iterates of $f$ with respect
to each argument are given as
\[
f_{1}^{n}(x_{1}|x_{2})=a^{n}x_{1}+b\frac{1-a^{n}}{1-a}x_{2}=x_{1},
\]
\[
f_{2}^{n}(x_{2}|x_{1})=b^{n}x_{2}+a\frac{1-b^{n}}{1-b}x_{1}=x_{2},
\]
so that $f$ is II-$n$ but not symmetric.
\end{example}
The following proposition asserts that II functions are involutory
of a common order.
\begin{prop}
\label{Prop2}If $f:X^{k}\rightarrow X$ is II, then $f$ is $(k+1)$-involutory.\footnote{In fact, $k+1$ is the minimal (positive integral) involutory order
of $f$, so that $f^{1}$ is periodic with period $k+1$.}
\end{prop}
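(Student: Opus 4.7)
My plan is to translate the identity $f^{k+1}=id_{X^{k}}$ into a statement about the sequence that $f^{1}$ iterates. By Definition \ref{Defn1} and equation (\ref{eq:4}), if we regard an arbitrary $(a_{1},\dots,a_{k})\in X^{k}$ as the initial block of the sequence defined by $a_{n+k}=f(a_{n},\dots,a_{n+k-1})$, then $f^{n}(a_{1},\dots,a_{k})=(a_{nk+1},\dots,a_{nk+k})$. It therefore suffices to show that $\{a_{n}\}$ is periodic with period $k+1$; since $k(k+1)$ is a multiple of $k+1$, this immediately gives $f^{k+1}(a_{1},\dots,a_{k})=(a_{1},\dots,a_{k})$ for every starting tuple, which is precisely $(k+1)$-involutoriness.

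The core idea is to combine the II hypothesis with the symmetry it forces. First I would apply the second half of Lemma \ref{Lemma3} to conclude that $f$ is symmetric, so that $f(x_{1},\dots,x_{k})$ depends only on the multiset $\{x_{1},\dots,x_{k}\}$. Next, the II-$\{1\}$ property applied to $(a_{n},a_{n+1},\dots,a_{n+k-1})$, whose image under $f$ is $a_{n+k}$, yields
\[
f(a_{n+k},a_{n+1},a_{n+2},\dots,a_{n+k-1})=a_{n}.
\]
The crucial observation is that the multisets $\{a_{n+k},a_{n+1},\dots,a_{n+k-1}\}$ and $\{a_{n+1},a_{n+2},\dots,a_{n+k}\}$ are identical, so by symmetry the preceding equation becomes $f(a_{n+1},a_{n+2},\dots,a_{n+k})=a_{n}$. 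But the left-hand side equals $a_{n+k+1}$ by the recurrence at index $n+1$, so $a_{n+k+1}=a_{n}$ for every $n$, which is the desired period-$(k+1)$ property.

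I do not anticipate a serious obstacle. The heart of the argument is the sliding-window identity of multisets that lets a single application of the II-$\{1\}$ inversion carry exactly one step of the window shift. Essentially the only things to verify carefully are that the II hypothesis really does trigger Lemma \ref{Lemma3} (i.e., that $f$ is II-$2$ in \emph{every} argument, not only in the first), and that the derivation never uses anything beyond symmetry, the involutory identity, and the recurrence, so that it works over an arbitrary set $X$ with no structure. Minimality of $k+1$, asserted in the footnote, would then follow from a separate check that no proper divisor of $k+1$ can be an involutory order for a generic choice of $f$, but that is not required for the statement of the proposition itself.
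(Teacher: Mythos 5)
Your proof is correct and is essentially the paper's argument rephrased in the language of the recurrence: both first invoke Lemma \ref{Lemma3} to get symmetry and then combine the single-argument involution with symmetry to obtain $f(x_{2},\ldots,x_{k},f(x))=x_{1}$, i.e.\ $a_{n+k+1}=a_{n}$, from which $f^{k+1}=id_{X^{k}}$ follows. The only difference is bookkeeping: the paper writes out all $k+1$ iterates of the resulting shift-type self-map explicitly, while you deduce the same conclusion from the $(k+1)$-periodicity of the scalar sequence via (\ref{eq:4}); the two presentations are interchangeable.
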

\begin{proof}
Let $x$ denote $(x_{1},x_{2},...,x_{k})\in X^{k}$, and let $x_{-j}$
denote $\{x_{1},x_{2},...,x_{k}\}\backslash\{x_{j}\}\in X^{k-1}$.
By Lemma \ref{Lemma3}, $f$ is symmetric, so for any $j\in\{1,2,...,k\}$,
we have $f(f(x),x_{-j})=x_{j}$ (independent of the order of arguments).
Consequently, the first $k+1$ iterates are computed as follows: 
\[
f^{1}(x_{1},x_{2},...,x_{k})=(f(x),x_{1},x_{2},...,x_{k-1})
\]
\[
\vdots
\]
\[
f^{p}(x_{1},x_{2},...,x_{k})=(x_{k-p+2},x_{k-p+3},...,x_{k},f(x),x_{1},x_{2},...,x_{k-p})
\]
\[
\vdots
\]
\[
f^{k}(x_{1},x_{2},...,x_{k})=(x_{2},x_{3},...,x_{k},f(x))
\]

\[
f^{k+1}(x_{1},x_{2},...,x_{k})=(x_{1},x_{2},...,x_{k})
\]
\end{proof}
Proposition \ref{Prop2} thus permits a way to generate functions
that are involutory of any order $k+1$ by defining a function $f:X^{k}\rightarrow X$
that is a direct analogue of an involutory self-map on $X$, as in
Example \ref{Ex3}, which is the analogue of the self-map $f:x\rightarrow A-x.$
One can see how Proposition \ref{Prop2} breaks down when the II assumption
is violated. Consider the following examples:
\begin{example}
\label{Ex5}Let $f:\mathbb{C}^{2}\rightarrow\mathbb{C}$ be given
as $f:(x_{1},x_{2})\rightarrow ax_{1}+bx_{2}$, where $a=\frac{-1+i\sqrt{3}}{2}$
and $b=\frac{-1-i\sqrt{3}}{2}$. By Example \ref{Ex4}, $f$ is II-3,
but it is not involutory of any integral order, since its iterates
are given by 
\[
f^{n}(x_{1},x_{2})=(F_{2n-1}a^{n}x_{1}+F_{2n}a^{n+1}x_{2},F_{2n}a^{n+2}x_{1}+F_{2n+1}a^{n}x_{2}),
\]
where $F_{n}$ is the $n$th Fibonacci number, with $F_{0}=0$ and
$F_{1}=1.$
\end{example}
\begin{example}
\label{Ex6}Consider the symmetric function $f:X^{2}\rightarrow X$
specified below, where $X\equiv\{x_{1},x_{2},x_{3}\}$.

\begin{table}[H]
\begin{centering}
\begin{tabular}{c|c|c|c}
$f$ & $x_{1}$ & $x_{2}$ & $x_{3}$\tabularnewline
\hline 
\hline 
$x_{1}$ & $x_{1}$ & $x_{2}$ & $x_{3}$\tabularnewline
\hline 
$x_{2}$ & $x_{2}$ & $x_{3}$ & $x_{1}$\tabularnewline
\hline 
$x_{3}$ & $x_{3}$ & $x_{1}$ & $x_{2}$\tabularnewline
\end{tabular}
\par\end{centering}
\end{table}
The value of $f(x_{i},x_{j})$ for $(x_{i},x_{j})\in X^{2}$ is read
as the table element corresponding to row $x_{i}$ and column $x_{j}$.
It is easily verified that $f$ is II-3 since fixing any row or column,
the elements form a 3-cycle. For instance, the 3-cycle corresponding
to the third row is $x_{3}\rightarrow x_{2}\rightarrow x_{1}\rightarrow x_{3}$.
Moreover, $f$ is $4$-involutory since all the elements of the set
$X^{2}$ endowed with the self-map $f^{1}$ are generated as part
of two 4-cycles and one singleton cycle: 
\[
(x_{1},x_{2})\rightarrow f^{1}:(x_{2},x_{3})\rightarrow f^{2}:(x_{1},x_{3})\rightarrow f^{3}:(x_{3},x_{2})\rightarrow f^{4}:(x_{1},x_{2});
\]
\[
(x_{2},x_{1})\rightarrow f^{1}:(x_{2},x_{2})\rightarrow f^{2}:(x_{3},x_{1})\rightarrow f^{3}:(x_{3},x_{3})\rightarrow f^{4}:(x_{2},x_{1});
\]
\[
(x_{1},x_{1})\rightarrow f^{1}:(x_{1},x_{1}).
\]
\end{example}
\begin{example}
\label{Ex7}Consider the function $f:X^{2}\rightarrow X$ specified
below, where $X\equiv\{x_{1},x_{2},x_{3},x_{4}\}$.

\begin{table}[H]
\centering{}%
\begin{tabular}{c|c|c|c|c}
$f$ & $x_{1}$ & $x_{2}$ & $x_{3}$ & $x_{4}$\tabularnewline
\hline 
\hline 
$x_{1}$ & $x_{1}$ & $x_{3}$ & $x_{4}$ & $x_{2}$\tabularnewline
\hline 
$x_{2}$ & $x_{3}$ & $x_{1}$ & $x_{2}$ & $x_{4}$\tabularnewline
\hline 
$x_{3}$ & $x_{4}$ & $x_{2}$ & $x_{1}$ & $x_{3}$\tabularnewline
\hline 
$x_{4}$ & $x_{2}$ & $x_{4}$ & $x_{3}$ & $x_{1}$\tabularnewline
\end{tabular}
\end{table}

Here, $f$ is both symmetric and persymmetric (i.e. symmetric along
the antidiagonal) and II-3. Moreover, $f$ is $15$-involutory since
all the elements of the set $X^{2}$ endowed with the self-map $f^{1}$
are generated as part of a 15-cycle and singleton cycle:

\[
(x_{1},x_{2})\rightarrow f^{1}:(x_{3},x_{2})\rightarrow f^{2}:(x_{2},x_{1})\rightarrow f^{3}:(x_{3},x_{4})
\]
\[
\rightarrow f^{4}:(x_{3},x_{3})\rightarrow f^{5}:(x_{1},x_{4})\rightarrow f^{6}:(x_{2},x_{4})\rightarrow f^{7}:(x_{4},x_{1})
\]
\[
\rightarrow f^{8}:(x_{2},x_{3})\rightarrow f^{9}:(x_{2},x_{2})\rightarrow f^{10}:(x_{1},x_{3})\rightarrow f^{11}:(x_{4},x_{3})
\]
\[
\rightarrow f^{12}:(x_{3},x_{1})\rightarrow f^{13}:(x_{4},x_{2})\rightarrow f^{14}:(x_{4},x_{4})\rightarrow f^{15}:(x_{1},x_{2});
\]
\[
(x_{1},x_{1})\rightarrow f^{1}:(x_{1},x_{1}).
\]
\end{example}
Examples \ref{Ex6} and \ref{Ex7} show how functions $f:X^{k}\rightarrow X$
that are II-$n$ for $n\geq3$ may be involutory of some order, but
the involutory order is sensitive to the cardinality of $X$, unlike
II functions. 

\subsection{\label{sec3.3}Fixed points and recursive cycles}

A natural question to consider is what the interpretation is of functions
that are $n$-involutory in terms of the recurrence relations that
they represent. We can approach this question more generally in light
of a far less restrictive condition than being $n$-involutory:
\begin{defn}
A function $f:X^{k}\rightarrow X$ is \textit{$n$-involutory at point
$x\in X^{k}$ }when $f^{n}(x)=x$\textit{.}
\end{defn}
Functions that are $n$-involutory at some point reveal specific cycles
of recurrence relations that they represent. For instance, the function
in Example \ref{Ex3} is $(k+1)$-involutory, and relatedly, any seed
value in $\mathbb{C}^{k}$ generates a $(k+1)$-cycle of the recurrence
relation represented by it, $a_{n+k}=A-\sum_{i=0}^{k-1}a_{n+i}$.
This relationship is to be expected, since the iterates of a function
$f:X^{k}\rightarrow X$ encode every $k$ terms of the recurrence
relation it represents. The relationship is formalized in the following
claim: 
\begin{claim}
\label{Claim1} If a recurrence relation given by (\ref{eq:3}) has
a $j$-cycle, then $f$ is $\frac{j}{gcd(j,k)}$-involutory at a point.
If $f$ is $n$-involutory at a point, then the recurrence relation
has a $j$-cycle for some $j$ such that $j|n$.
\end{claim}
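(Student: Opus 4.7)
The whole argument is driven by the identity \eqref{eq:4},
\[
f^n(a_1, a_2, \ldots, a_k) = (a_{nk+1}, a_{nk+2}, \ldots, a_{nk+k}),
\]
which converts iterates of $f^1$ in the state space $X^k$ into blocks of $k$ consecutive terms of the scalar sequence $\{a_n\}$ generated by the recurrence.

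For the forward direction, I would let $(a_1, \ldots, a_k)$ be a seed for which the sequence $\{a_n\}$ has period $j$, set $x := (a_1, \ldots, a_k)$, and choose $m := j/\gcd(j,k)$. Since $mk = \mathrm{lcm}(j,k)$ is a multiple of $j$, the periodicity yields $a_{mk+i} = a_i$ for each $i \in \{1, \ldots, k\}$, and \eqref{eq:4} then gives $f^m(x) = x$. Hence $f$ is $(j/\gcd(j,k))$-involutory at $x$. The only nontrivial step is the elementary number-theoretic observation that $j/\gcd(j,k)$ is the smallest positive $m$ with $j \mid mk$.

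For the backward direction, I would start from $f^n(x) = x$ at some $x = (a_1, \ldots, a_k) \in X^k$ and propagate $x$ into a sequence $\{a_n\}_{n \ge 1}$ via the recurrence. By \eqref{eq:4}, $a_{nk+i} = a_i$ for $i = 1, \ldots, k$, and then the recurrence forces $a_{nk+i} = a_i$ for all $i \ge 1$, so the sequence is a cycle. The orbit of $x$ under $f^1$ has some length $j$ that necessarily divides $n$ (the standard group-theoretic fact that orbit lengths divide any $m$ with $f^m(x) = x$), and this state-space $j$-cycle is the asserted cycle of the recurrence.

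The argument is short, and the only substantive content is \eqref{eq:4} together with a divisibility count; the main subtlety I would expect to flag is that the word ``cycle'' is used in two slightly different senses---the period of the scalar sequence $\{a_n\}$ in the forward direction, and the orbit length of the state $x$ under $f^1$ in the backward direction---with the factor $\gcd(j,k)$ being exactly the conversion between the two viewpoints.
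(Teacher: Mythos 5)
The paper states this Claim without proof, so there is no argument of the author's to compare against; your route through the identity (\ref{eq:4}) is certainly the intended one, and your forward direction is correct: if the scalar sequence has period $j$ and $m=j/\gcd(j,k)$, then $mk=\mathrm{lcm}(j,k)$ is a multiple of $j$, so (\ref{eq:4}) gives $f^{m}(x)=x$.

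The backward direction, however, has a genuine gap, and you have in fact put your finger on it yourself in your closing paragraph: you prove that the orbit length of $x$ under $f^{1}$ divides $n$, and then declare that state-space orbit to be ``the asserted cycle of the recurrence.'' But the only reading of ``the recurrence relation has a $j$-cycle'' under which the forward half's factor $\gcd(j,k)$ makes sense is that the scalar sequence $\{a_{n}\}$ has period $j$, and under that reading the conclusion $j\mid n$ is false. What your computation actually shows is that the scalar sequence has period $nk$, so its minimal period $j$ satisfies $j\mid nk$ --- and $nk$ cannot in general be improved to $n$. Concretely, take $X=\{0,1\}$, $k=2$, $f(x_{1},x_{2})=1-x_{1}$; then $f^{1}(x_{1},x_{2})=(1-x_{1},1-x_{2})$, so $f$ is $2$-involutory at every point, yet every orbit of the recurrence $a_{n+2}=1-a_{n}$ has minimal period $4$, and $4\nmid 2$. (The paper's own sentence immediately after the Claim --- that a fixed point of $f^{1}$, i.e.\ the case $n=1$, corresponds to a $j$-cycle with $j\mid k$ rather than $j\mid 1$ --- is consistent with $j\mid nk$, not with the Claim as literally stated.) So a correct write-up must either establish $j\mid nk$ for the scalar period, or adopt the state-space reading of ``cycle'' throughout, in which case the forward half should assert $j$-involutoriness rather than $j/\gcd(j,k)$-involutoriness. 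Switching between the two meanings inside one two-part statement, as your proposal does, yields proofs of two halves of two different statements rather than a proof of the Claim.
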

In the special case in which a recurrence relation given by (\ref{eq:3})
has a $k-$cycle characterized by some $x^{0}\in X^{k}$, then $f$
is $1$-involutory at $x^{0}$; that is, $x^{0}$ is a fixed point
of $f^{1}$. By definition, such a fixed point $x^{0}=(x_{1}^{0},x_{2}^{0},...,x_{k}^{0})\in X^{k}$
satisfies 
\[
f(x_{1}^{0},x_{2}^{0},...,x_{k}^{0})=x_{1}^{0}
\]
\[
f(x_{2}^{0},x_{3}^{0},...,x_{k}^{0},x_{1}^{0})=x_{2}^{0}
\]
\[
\vdots
\]
\[
f(x_{k}^{0},x_{1}^{0},x_{2}^{0},...,x_{k-1}^{0})=x_{k}^{0}.
\]
Conversely, such a fixed point of $f^{1}$ corresponds with a $j$-cycle
of the recurrence relation (\ref{eq:3}), where $j|k$. In the special
case in which the recurrence relation has a 1-cycle, so that the recurrence
relation is constant when seeded by the element $x\in X$ of the 1-cycle,
then the corresponding fixed point of $f^{1}$, $x^{0}\in X^{k}$,
is symmetric in the sense that $x_{i}^{0}=x$ $\forall i\in\{1,2,...,k\}$.
Of course, if $f$ is a symmetric function, then all the fixed points
of $f^{1}$ must be symmetric as such and the recurrence relation
it represents can only have 1-cycles. 

While Claim \ref{Claim1} asserts that $j$-cycles in a recurrence
relation correspond with its representative function $f:X^{k}\rightarrow X$
being involutory of some order at a point, in fact, any such $j$-cycle
can be understood as corresponding with a point at which a function
is $1$-involutory even when $j\nmid k$, by a trivial redefinition
of $f$ that augments its domain. By defining the map $\tilde{f}:X^{k'}\rightarrow X$
for $k'>k$ such that $j|k'$, given as 
\[
\tilde{f}:(x_{1},x_{2},...,x_{k'})\rightarrow f(\tilde{x}_{k'-k+1},\tilde{x}_{k'-k+2},...,\tilde{x}_{k'})|_{\tilde{x}_{m}=f(\tilde{x}_{m-k},\tilde{x}_{m-k+1},...,\tilde{x}_{m-1})\text{ for }m\geq k+1;\tilde{x}_{m}=x_{m}\text{ for }m\leq k},
\]
then a $j$-cycle of the recurrence relation represented by $f$ corresponds
with a fixed point of $\tilde{f}^{1}$. For instance, the recurrence
relation in Example \ref{Ex3} that was characterized by $f:\mathbb{C}^{k}\rightarrow\mathbb{C}$
---and for which every point in $\mathbb{C}^{k+1}$ is a $(k+1)$-cycle---
satisfies 
\[
a_{n+k+1}=\tilde{f}(a_{n},a_{n+1},...,a_{n+k}),
\]
where $\tilde{f}:\mathbb{C}^{k+1}\rightarrow\mathbb{C}$ is defined
as 
\[
\tilde{f}:(x_{1},x_{2},...,x_{k+1})\rightarrow A-\tilde{x}_{k+1}-\sum_{j=2}^{k}x_{m}|_{\tilde{x}_{k+1}=A-\sum_{q=1}^{k}x_{q}}=x_{1}.
\]
Thus, $\tilde{f}$ is simply $\hat{id}{}_{\mathbb{C}^{k+1}}$ (as
defined in Lemma \ref{Lemma2}), which is, of course, $1$-involutory
at every point in $\mathbb{C}^{k+1}$. In general, we can thus understand
attractors of a recurrence relation (\ref{eq:3}) through fixed point
iteration of the self-map $f^{1}$ given function $f$ (appropriately
augmented) that characterizes behavior of the recurrence relation.

\section{\label{sec4}Discussion}

This paper offers an introduction to a proposed notion of defining
function iteration for maps $f:X^{k}\rightarrow X$ that is based
on representing recurrence relations of the kind given in (\ref{eq:3}).
There are several questions that can be examined based on the definitions
posed in section \ref{sec3} that are not considered in this paper
but that can likely be addressed within the scope of combinatorics
and group theory. For instance, for finite sets $X$, one can consider
enumerating the possible functions that are II-$n$\{$j$\} for a
specified number of arguments $j$ (and possibly also involutory of
some integral order) and determining when they are symmetric. This
problem extends the line of research started in 1800 by Heinrich August
Rothe, who obtained a recursive formula defining the \textit{telephone
numbers}, which enumerate the involutions in the symmetric groups
(c.f. \citet{Chowla1951} and \citet{Knuth1973}). Moreover, as per
the fifth property of Lemma \ref{Lemma2}, one may consider the conjugacy
classes of functions that are involutory of some order, as well as
the class number associated with various sets.

Likewise, one may also consider how the second claim in Lemma \ref{Lemma3}
may extend to II-$n$ functions. We can heuristically consider a possible
extension by further examining Example \ref{Ex4}. Observe that the
function in Example \ref{Ex4} can afford to be asymmetric while still
satisfying the restriction of being II-3 because there are fewer than
three arguments ($k=2$). Suppose more generally that $f$ in this
setup is constrained to be II-$n$ for integer $n\geq2$ and is defined
as a linear map in $k$ arguments, i.e. $f:\mathbb{C}^{k}\rightarrow\mathbb{C}$,
where $f:(x_{1},x_{2},...,x_{k})\rightarrow\sum_{i=1}^{k}a_{i}x_{i}$.
Note that all coefficients $a_{i}$ must be $n$th roots of unity
but none can be unity itself, allowing the coefficients $n-1$ degrees
of freedom. However, if $k\geq m(n-1)+1$ for an integer $m\geq1$,
one can see that at least $m+1$ coefficients must be identical by
the pigeonhole principle. One can thus consider more generally if
a function $f:X^{k}\rightarrow X$ that is II-$n$ for integer $n\geq2$
with $k\geq m(n-1)+1$ must be symmetric in at least $m+1$ of its
arguments. 

Another question that may be considered is how Proposition \ref{Prop2}
might be extended. Examples \ref{Ex6} and \ref{Ex7} demonstrate
functions defined on finite sets that are II-$n$ (and symmetric)
and violate Proposition \ref{Prop2} insofar as not having a common
involutory order (depending only on $k$). Nonetheless, they are involutory
of some order. One can thus consider for finite sets $X$ under what
conditions functions that are II-$n$ are involutory of some order,
and how this order varies with the cardinality of $X$. One may also
consider how many distinct kinds of cycles are associated with such
functions (e.g. recall all elements in $X$ in Example \ref{Ex6}
were part of one of two 4-cycles or a singleton cycle). Moreover,
while Proposition \ref{Prop2} gives sufficient conditions for being
$n$-involutory, one may consider what the necessary conditions are,
including whether there are examples of such functions that are asymmetric
outside of the kind described by property 4 of Lemma \ref{Lemma2}.

A final point to consider is whether we may likewise suitably define
function iteration for functions $f:X^{k}\rightarrow X^{\ell}$. The
multiplicity of outputs may be reasonably treated as representing
distinct recursive equations, but there is no unique suitable way
to treat the inputs in this case. For instance, we may define function
iteration of a function $f:X^{3}\rightarrow X^{2}$ in a way that
represents the recursive system 
\[
a_{n+2}=f_{1}(a_{n},b_{n},a_{n+1})
\]
\[
b_{n+2}=f_{2}(a_{n},b_{n},b_{n+1}),
\]
or alternatively 
\[
a_{n+2}=f_{1}(a_{n},b_{n},a_{n+1})
\]
\[
b_{n+2}=f_{2}(a_{n},b_{n},a_{n+1}),
\]
or in terms of other combinations of three inputs. The state of the
first system is characterized by $(a_{n},a_{n+1},b_{n},b_{n+1})$
and function iterates may thus be defined as a self-map over $X^{4}$,
while the state of the second system is characterized by $(a_{n},a_{n+1},b_{n})$
and function iterates may thus be defined as a self-map over $X^{3}.$
Consequently, it would be more appropriate to define function iteration
in such cases in a way that suits the particular application considered.\newpage{}

\bibliographystyle{chicago}
\bibliography{Function_iteration_and_babbage_equation_linksv2}

\end{document}